\newmdenv[%
leftmargin=0.5cm,
backgroundcolor=yellow!10,%
roundcorner=5pt,%
tikzsetting={draw=blue, line width=2.0pt}%
]{SpecialText}%
\numberwithin{equation}{section}
\crefname{section}{Section}{Sections}
\crefname{subsection}{Subsection}{Subsections}
\crefname{condition}{Condition}{Conditions}
\crefname{hypothesis}{Hypothesis}{Hypothesis}
\crefname{assumption}{Assumption}{Assumptions}
\crefname{lemma}{Lemma}{Lemmas}
\crefname{claim}{Claim}{Claims}
\crefname{remark}{Remark}{Remarks}
\crefname{question}{Question}{Questions}
\newtheorem{theorem}{Theorem}[section]
\newtheorem{lemma}[theorem]{Lemma}
\newtheorem{corollary}[theorem]{Corollary}
\newtheorem{proposition}[theorem]{Proposition}
\newtheorem{remark}[theorem]{Remark}        
\numberwithin{equation}{section}
\newtheorem{question}[theorem]{Question}
\def\YYint#1#2#3{{\setbox0=\hbox{$#1{#2#3}{\iint}$}
		\vcenter{\hbox{$#2#3$}}\kern-.50\wd0}}
\def\XXint#1#2#3{{\setbox0=\hbox{$#1{#2#3}{\int}$}
		\vcenter{\hbox{$#2#3$}}\kern-.50\wd0}}
\def\namedlabel#1#2{\begingroup
	\def\@currentlabel{#2}%
	\label{#1}\endgroup
}
\newcommand{\rmh}[1]{\mathpalette{\raisem@th{#1}}}
\newcommand{\raisem@th}[3]{\hspace*{-1pt}\raisebox{#1}{$#2#3$}}
\newcommand{\redref}[2]{\texorpdfstring{\protect\hyperlink{#1}{\textcolor{black}{(}\textcolor{red}{#2}\textcolor{black}{)}}}{}}
\newcommand{\redlabel}[2]{\hypertarget{#1}{\textcolor{black}{(}\textcolor{red}{#2}\textcolor{black}{)}}}
\newcommand{\descref}[2]{\hyperref[#1]{\textcolor{black}{(}\textcolor{blue}{\bf #2}\textcolor{black}{)}}}
\newcommand{\dref}[2]{\hyperref[#1]{\textcolor{black}{(}\textcolor{blue}{\bf #2}\textcolor{black}{)}}}
\g@addto@macro\normalsize{%
	\setlength\abovedisplayskip{3pt}
	\setlength\belowdisplayskip{3pt}
	\setlength\abovedisplayshortskip{1pt}
	\setlength\belowdisplayshortskip{3pt}
}
\def\ps@pprintTitle{%
	\let\@oddhead\@empty
	\let\@evenhead\@empty
	\def\@oddfoot{}%
	\let\@evenfoot\@oddfoot}
\newcommand\RR{\mathbb{R}}
\newcommand\A{\mathbb{A}}
\newcommand\PP{\mathbb{P}}
\newcommand{\al}{\alpha}
\newcommand{\ve}{\varepsilon}
\newcommand{\om}{\omega}
\newcommand{\la}{\lambda}
\newcommand{\La}{\Lambda}
\DeclareMathOperator{\dv}{div}
\DeclareMathOperator{\loc}{loc}
\DeclareMathOperator*{\err}{\mathbb{E}_r}
\newcommand{\iprod}[2]{\left\langle #1 ,  #2\right\rangle}
\newcommand{\lbr}[1][(]{\left#1}
\newcommand{\rbr}[1][)]{\right#1}
\newcommand{\txt}[1]{\qquad \text{#1} \qquad}
\newcommand{\pa}{\partial}
\newcommand{\nbar}{\bm\bar{\nabla}}
\newcounter{whitney}
\newcounter{ineqcounter}
\begin{document}
	\begin{frontmatter}
		
		\title{A note on H\"older regularity of weak solutions to linear elliptic equations}
		
		\author[myaddress]{Karthik Adimurthi\tnoteref{thanksfirstauthor}}
		
		\ead{karthikaditi@gmail.com and kadimurthi@tifrbng.res.in}

		\tnotetext[thanksfirstauthor]{Supported by the Department of Atomic Energy,  Government of India, under
			project no.  12-R\&D-TFR-5.01-0520}

		\address[myaddress]{Tata Institute of Fundamental Research, Centre for Applicable Mathematics,Bangalore, Karnataka, 560065, India}

		\begin{abstract}
			
			In this paper, we show that weak solutions of  $$-\dv \A(x)\nabla u = 0 \txt{where} A(x)= A(x)^T \,\, \text{and} \,\, \la |\zeta|^2 \leq \iprod{A(x)\zeta}{\zeta} \leq \La |\zeta|^2,$$
			and $\A(x) \equiv \A$ is a constant matrix are H\"older continuous $u \in C^{\alpha}_{\loc}$ with  $\al \geq \tfrac12 \lbr-(n-2) + \sqrt{(n-2)^2 + \tfrac{4(n-1)\la}{\La}} \rbr$. This implies that the example constructed by Piccinini - Spagnolo is sharp in the class of constant matrices $\A(x) \equiv \A$. The proof of H\"older regularity does not go through a reduction of oscillation type argument and instead is achieved through a monotonicity formula. 
			
			In the case of general matrices $\A(x)$, we obtain the same regularity under some additional hypothesis.  
		\end{abstract}
		
		\begin{keyword}
			H\"older regularity\sep Poho\v{z}aev identity\sep De Giorgi - Nash - Moser theory
			\MSC [2020]: 35B45 \sep 35B65 \sep 35J15
		\end{keyword}
		
	\end{frontmatter}
	
	\begin{singlespace}
		\tableofcontents
	\end{singlespace}

	\section{Introduction}
	
	In this paper, we shall consider weak solutions $u \in W^{1,2}_{\loc}(\RR^n)$ solving 
	\begin{equation}\label{maineq}
		-\dv \A(x) \nabla u = 0 \txt{with} \la |\xi|^2 \leq \iprod{\A(x)\xi}{\xi} \leq \La |\xi|^2,
	\end{equation}
	where $\A(x)$ is assumed to be symmetric and $\la,\La \in (0,\infty)$ are any two fixed positive constants.  
	
	The H\"older regularity results were first proved in \cite{MR0093649,MR100158} and subsequently, a different proof was given in \cite{MR170091,MR159138,MR159139}.   All three approaches provided the tools to prove a much stronger Harnack inequality and H\"older regularity was deduced from this. In \cite{MR308945}, a sharp form of Harnack inequality was proved which implied that the H\"older exponent had exponential dependence on $\tfrac{\la}{\La}$. Since Harnack inequality obtained in \cite{MR308945} was sharp, this implied that the approaches developed in \cite{MR0093649,MR100158,MR170091} could at best give the H\"older exponent that had exponential dependence on $\tfrac{\la}{\La}$.

	In $\RR^2$, a new proof using a monotonicity formula which is an optimized version of the `hole filling' technique was given in \cite{MR361422} and they obtained the sharp H\"older exponent to be $\sqrt{\tfrac{\la}{\La}}$ and conjectured that in higher dimensions, the exponent should have the form
	\[
	\al \geq  \frac12 \lbr-(n-2) + \sqrt{(n-2)^2 + \frac{4(n-1)\la}{\La}} \rbr.
	\]
	In this regard, we also mention \cite{MR192330,MR798173} and references therein where the `hole filling' technique was extended to higher dimensional linear elliptic and parabolic equations using  Green's function estimates. 
	
	In this paper,  we prove the following theorem:
	%
	%
	%
	%
	%
	\begin{theorem}\label{mainthm}
		Any local weak solutions of \cref{maineq} with $\A(x) \equiv \A$  is H\"older continuous $u \in C_{\loc}^{\al}(\RR^n)$ with  $$\al \geq \frac12 \lbr-(n-2) + \sqrt{(n-2)^2 + \frac{4(n-1)\la}{\La}} \rbr.$$ 
	\end{theorem}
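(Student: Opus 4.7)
The plan is to derive a monotonicity formula for the weighted energy $E(r) := \int_{B_r} \iprod{\A \nabla u}{\nabla u}\, dx$ using the Pohozaev identity associated to the constant-coefficient operator $-\dv(\A \nabla \cdot)$, and then conclude via the Campanato characterization of Hölder spaces. Without loss of generality I fix the base point at the origin and aim to show $E(r) \leq C r^{n-2+2\al}$ for $r$ smaller than some $r_0$; applying the same estimate at every interior point yields $u \in C^\al_{\loc}$.

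The first step is to establish two fundamental boundary identities on each $B_r$. The Pohozaev identity, obtained by multiplying $\dv(\A \nabla u) = 0$ by the test function $x \cdot \nabla u$ and integrating by parts, yields
$$(n-2) E(r) = r \int_{\partial B_r} \lbr \iprod{\A \nabla_T u}{\nabla_T u} - (\nabla_\nu u)^2 \iprod{\A \nu}{\nu} \rbr d\mathcal{H}^{n-1},$$
where $\nabla_\nu u$ and $\nabla_T u$ denote the normal and tangential parts of $\nabla u$ on $\partial B_r$. Separately, testing the equation against $u$ and integrating by parts gives
$$E(r) = \int_{\partial B_r} (u - \bar u_r)\, \iprod{\A \nabla u}{\nu}\, d\mathcal{H}^{n-1},$$
where $\bar u_r$ denotes the average of $u$ over $\partial B_r$.

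Next, I apply a weighted Cauchy--Schwarz with weight $\iprod{\A\nu}{\nu}$ to the Green identity, together with the pointwise bound $\iprod{\A\nabla u}{\nu}^2 \leq \iprod{\A\nu}{\nu}\,\iprod{\A\nabla u}{\nabla u}$ (Cauchy--Schwarz in the $\A$-inner product), to deduce
$$E(r)^2 \leq H(r)\, E'(r), \qquad H(r):= \int_{\partial B_r}(u-\bar u_r)^2 \iprod{\A\nu}{\nu}\, d\mathcal{H}^{n-1}.$$
Combining the ellipticity bounds $\la I \leq \A \leq \La I$ with the classical sharp Poincaré inequality on $\partial B_r$ yields the weighted Poincaré
$$H(r) \leq \frac{\La\, r^2}{(n-1)\la}\int_{\partial B_r} \iprod{\A\nabla_T u}{\nabla_T u}\, d\mathcal{H}^{n-1}.$$
The exponent $\al$ in the theorem satisfies the quadratic $\al(\al+n-2) = (n-1)\la/\La$, so the constant above is precisely $r^2/(\al(\al+n-2))$, the spectral constant that will pin down the Hölder exponent.

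Finally, combining the weighted Poincaré with the Pohozaev identity, which expresses the tangential integral in terms of $E(r)$ and the normal integral $\int_{\partial B_r}(\nabla_\nu u)^2\iprod{\A\nu}{\nu}$, I expect to close into a differential inequality of the form $r E'(r) \geq (n-2+2\al)\, E(r)$; integration then gives the Campanato-type bound $E(r) \leq C r^{n-2+2\al}$, and Hölder continuity follows by Campanato's criterion. The main obstacle is the closing step: one must carefully handle the cross term $\int \nabla_\nu u \iprod{\A \nabla_T u}{\nu}$ that arises when expanding $\iprod{\A \nabla u}{\nabla u}$ into its normal and tangential components (this term vanishes identically for the Laplacian but not in general), and show that the balance between this term, the Poincaré constant, and the Pohozaev coefficient produces exactly the quadratic $\al^2+(n-2)\al - (n-1)\la/\La=0$ characterizing the sharp exponent.
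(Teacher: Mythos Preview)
Your overall strategy coincides with the paper's: reduce to the differential inequality $rE'(r)\geq (n-2+2\al)E(r)$ for $E(r)=\int_{B_r}\iprod{\A\nabla u}{\nabla u}\,dx$ (this is exactly \cref{mainthm2} after scaling, since $n-2+2\al=\sqrt{(n-2)^2+4(n-1)\la/\La}$), and the ingredients are Poincar\'e on spheres, a Poho\v{z}aev identity, and Campanato. However, two of your concrete choices prevent the closing step from producing the sharp constant.

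First, the inequality $E(r)^2\le H(r)E'(r)$, obtained by invoking the pointwise bound $\iprod{\A\nabla u}{\nu}^2\le\iprod{\A\nu}{\nu}\iprod{\A\nabla u}{\nabla u}$, is already too crude for $\A=I$. With $\A=I$ your chain gives $E^2\le \tfrac{r^2}{n-1}T\,E'$, and combining with your Poho\v{z}aev identity $T=N+\tfrac{n-2}{r}E$, $E'=N+T$ one finds only $rE'\ge \tfrac12\big(-(n-2)+\sqrt{(n-2)^2+8(n-1)}\big)E$, strictly below $nE$ for $n\ge 3$. The loss occurs precisely when you replace the conormal quantity $\int \iprod{\A\nabla u}{\nu}^2/\iprod{\A\nu}{\nu}$ by $E'$. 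The paper instead keeps this conormal term via Young's inequality with a free parameter $\ve$, and uses the Schur complement bound (\cref{schur_lemma}) $\la|u_T|^2\le \iprod{\A\nabla u}{\nabla u}-\iprod{\A\nabla u}{\nu}^2/\iprod{\A\nu}{\nu}$ in place of the crude ellipticity bound $\la|\nabla_T u|^2\le\iprod{\A\nabla_T u}{\nabla_T u}$ to control the Poincar\'e term.

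Second, the Poho\v{z}aev identity you derive from the multiplier $x\cdot\nabla u$ yields $(n-2)E=r(T-N)$, which involves $N=\int(\nabla_\nu u)^2\iprod{\A\nu}{\nu}$ rather than the conormal integral $\int\iprod{\A\nabla u}{\nu}^2$ that appears after the Schur step. These differ by exactly the cross term you flag, and there is no inequality relating them with a clean constant. The paper's resolution is to use the multiplier $\iprod{\A x}{\nabla u}$, giving the identity of \cref{poho_const_corr} whose boundary term is $2\int_{S_1}\iprod{\A\nabla u}{x}^2$; this matches the conormal quantity produced by the Schur lemma and allows the argument to close after an optimization over $\ve$ and the trace of $\A$. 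In short, the cross term is not estimated directly but is absorbed by choosing the multiplier $\A x$ together with the Schur complement, which your plan is missing.
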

	
	\begin{corollary}
		Any local weak solutions of \cref{maineq} with a general $\A(x)$ and $\err \geq 0$ is H\"older continuous $u \in C_{\loc}^{\al}(\RR^n)$ with  $$\al \geq \frac12 \lbr-(n-2) + \sqrt{(n-2)^2 + \frac{4(n-1)\la}{\La}} \rbr.$$ 
		The quantity $\err$ is as obtained in \cref{poho_gen} and the regularity is obtained in \cref{rmka4.9}.
	\end{corollary}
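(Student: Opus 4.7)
The plan is to follow the monotonicity-formula approach developed for the constant-coefficient case in \cref{mainthm}, adapting it through the generalized Poho\v{z}aev identity \cref{poho_gen}. The role of the hypothesis $\err \geq 0$ is precisely to make the constant-coefficient argument go through verbatim in the variable-coefficient setting.

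First, I would apply the generalized Poho\v{z}aev identity \cref{poho_gen} on a ball $B_r(x_0)\subset\RR^n$ contained in the domain. This identity expresses a combination of the scaled energy $\Phi(r):=\int_{B_r(x_0)}\iprod{\A(x)\na u}{\na u}\,dx$ and the error term $\err$ in terms of a boundary integral of the form
\[
\int_{\pa B_r(x_0)}\bigl[\iprod{\A\na u}{\na u}(x\cdot\nu)\,-\,2\iprod{\A\na u}{\nu}(x\cdot\na u)\bigr]\,d\sigma.
\]
Under the assumption $\err\geq 0$, this term can be discarded with favourable sign, leaving an inequality whose structure is identical to the one obtained in the constant-coefficient proof of \cref{mainthm}.

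Next, I would close the monotonicity argument in the same manner as in \cref{mainthm}: split $\na u$ on $\pa B_r(x_0)$ into its tangential and normal parts, apply Cauchy-Schwarz to the mixed boundary term using the pointwise ellipticity $\la I \leq \A(x)\leq \La I$, and combine with the coarea identity $\Phi'(r)=\int_{\pa B_r(x_0)}\iprod{\A\na u}{\na u}\,d\sigma$ to obtain a first-order ODE inequality of the form
\[
\Phi(r)\leq \frac{r}{2\al+n-2}\,\Phi'(r),
\]
with $\al$ the sharp exponent in the statement. Integration then gives the Campanato-type decay $\Phi(r)\leq C\,r^{n-2+2\al}$, and the Campanato characterization of H\"older spaces delivers $u\in C^\al_{\loc}$ with the claimed exponent.

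The main obstacle lies in verifying the sharpness of the constant in the boundary estimate. In the constant-coefficient setting of \cref{mainthm} the Cauchy-Schwarz step saturates and one recovers exactly the optimal $\al$; in the variable-coefficient setting one must check that the pointwise anisotropy of $\A(x)$ on $\pa B_r$ does not degrade this constant. Because the ellipticity bounds are pointwise uniform, the same algebra survives, and $\err\geq 0$ is the only extra ingredient beyond \cref{mainthm} that is needed. A secondary technical point is that $\A$ is merely measurable, so the boundary integrals must be interpreted via coarea for almost every $r$, after which the resulting differential inequality is integrated in the usual way.
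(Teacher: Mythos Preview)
Your high-level strategy coincides with the paper's: use the generalized Poho\v{z}aev identity of \cref{poho_gen}, drop $\err$ by the sign hypothesis, and derive the differential inequality $\Phi(r)\le \frac{r}{2\al+n-2}\,\Phi'(r)$, which is exactly what \cref{rmka4.9} records. Two points in your sketch, however, are inaccurate in ways that matter for the \emph{sharp} exponent.

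First, the boundary expression you display is not the one produced by \cref{poho_gen}. That identity is obtained from $\dv\bigl(\A\nabla u\,\iprod{x}{\A\nabla u}\bigr)$ and $\dv\bigl(\A x\,\iprod{\A\nabla u}{\nabla u}\bigr)$, so on $S_1$ the boundary terms carry the weights $\iprod{\A x}{x}$ and $\iprod{\A\nabla u}{x}^2$ (that is, $p_{11}$ and $\iprod{\PP\nbar u}{\vec{e_1}}^2$ in polar coordinates), not $(x\cdot\nu)$ and $\iprod{\A\nabla u}{\nu}(x\cdot\nabla u)$. The latter come from the \emph{standard} multiplier $x\cdot\nabla u$, whose remainder is a different functional of $\partial\A$; since the hypothesis is literally ``$\err\ge 0$'' with $\err$ defined in \cref{poho_gen}, you must use the $\A$-weighted identity, and the subsequent boundary algebra has to be done with $p_{11}$ and $\iprod{\PP\nbar u}{\vec{e_1}}^2$.

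Second, ``split into tangential/normal and apply Cauchy--Schwarz'' is not enough to reach the stated constant. The paper's boundary step (Steps~1--2 and~5 of the proof of \cref{mainthm2}) uses three ingredients your sketch omits: the sharp spherical Poincar\'e inequality of \cref{poincare} with constant $\tfrac{1}{n-1}$; the Schur-complement inequality of \cref{schur_lemma}, which turns $\la|u_T|^2$ into $\iprod{\PP\nbar u}{\nbar u}-p_{11}^{-1}\iprod{\PP\nbar u}{\vec{e_1}}^2$ and is what makes the Poho\v{z}aev boundary term $\iprod{\PP\nbar u}{\vec{e_1}}^2$ reappear with a usable sign; and an optimization over the Young parameter $\ve$ together with the trace bound $\sum_i a_{ii}\le n\La$. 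Only after this optimization does the constant become $\sqrt{(n-2)^2+4(n-1)\la/\La}$. A bare Cauchy--Schwarz on the boundary reproduces the suboptimal estimate \cref{a3.1}, which the paper explicitly rejects in its discussion preceding \cref{thm3.1}.
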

	It suffices to prove the  following theorem instead:
	\begin{theorem}\label{mainthm2}
		Let $S_1$ be the sphere of unit radius in $\RR^n$ and $B_1$ be the unit ball in $\RR^n$. Then, any weak solution of \cref{maineq} satisfies
		\[
		\int_{S_1} \iprod{\A\nabla u}{\nabla u} \,d\sigma  \geq \sqrt{(n-2)^2 + \frac{4(n-1)\la}{\La}}\int_{B_1}\iprod{\A\nabla u}{\nabla u}\,dx.
		\]
	\end{theorem}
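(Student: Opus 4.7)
The plan is to prove Theorem~\ref{mainthm2} by a Pohozaev-type identity combined with a test-function argument using $u$ itself, reducing the volume-vs-boundary comparison to an algebraic inequality on $S_1$.

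First, I would multiply the equation $-\dv(\A\nabla u) = 0$ by the Pohozaev multiplier $\phi = x\cdot\nabla u$ and integrate over $B_1$. Using the symmetry of $\A$ to rewrite $\iprod{\A\nabla u}{x\cdot\nabla^2 u} = \tfrac12\, x\cdot\nabla\iprod{\A\nabla u}{\nabla u}$, followed by integration by parts (noting $x\cdot\nu = 1$ on $S_1$), one obtains the classical identity
\[
\int_{S_1}\iprod{\A\nabla u}{\nabla u}\,d\sigma - (n-2)\int_{B_1}\iprod{\A\nabla u}{\nabla u}\,dx = 2\int_{S_1}(x\cdot\nabla u)\iprod{\A\nabla u}{\nu}\,d\sigma.
\]
Separately, after subtracting a constant so that $\int_{S_1}u\,d\sigma = 0$ (which leaves every gradient quantity unchanged), testing the equation against $u$ itself gives $D = \int_{S_1} u\iprod{\A\nabla u}{\nu}\,d\sigma$. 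Decomposing $\nabla u = u_\nu\nu + \nabla_T u$ on $S_1$ and rearranging the Pohozaev identity yields the Rellich-type companion
$\int_{S_1}\iprod{\A\nabla_T u}{\nabla_T u}\,d\sigma - \int_{S_1} u_\nu^2\iprod{\A\nu}{\nu}\,d\sigma = (n-2)D$.

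Set $\alpha = \tfrac12(m - (n-2))$, so $\alpha(\alpha + n - 2) = (n-1)\la/\La$ by the definition of $m$. Combining the Pohozaev identity with the $u$-test identity, the target $H\geq mD$ is equivalent to the sphere inequality
\[
\int_{S_1}\bigl(x\cdot\nabla u - \alpha\, u\bigr)\iprod{\A\nabla u}{\nu}\,d\sigma \;\geq\; 0.
\]
Note that a homogeneous saturator $u(x) = |x|^\alpha f(x/|x|)$ makes $x\cdot\nabla u - \alpha u\equiv 0$ on $S_1$, so the inequality is an equality there; this is the expected extremizer. For general $u$ I would establish non-negativity by combining the $\A$-bilinear Cauchy--Schwarz $\iprod{\A\nabla u}{\nu}^2 \leq \iprod{\A\nabla u}{\nabla u}\iprod{\A\nu}{\nu}$ on the boundary, the Poincar\'e inequality on $S^{n-1}$ with sharp constant $\tfrac{1}{n-1}$ applied to the mean-zero $u$, and the ellipticity bounds $\la \leq \iprod{\A\nu}{\nu} \leq \La$. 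The characterizing relation $\alpha(\alpha + n - 2) = (n-1)\la/\La$ should emerge as the discriminant condition of the resulting quadratic inequality in $X = H/D$.

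The main obstacle will be this last step. A naive AM--GM application that absorbs the cross term $\int_{S_1} u_\nu\iprod{\A\nu}{\nabla_T u}\,d\sigma$ into $H$ via Cauchy--Schwarz too early produces only the weaker bound $H/D \geq 2\sqrt{(n-1)\la/\La} - (n-2)$, which matches the Piccinini--Spagnolo constant merely in dimension $n = 2$. To recover the sharp constant $m = \sqrt{(n-2)^2 + 4(n-1)\la/\La}$ in higher dimensions, this cross term has to be retained and handled jointly with the Rellich identity, so that the $(n-2)D$ contribution enters the discriminant multiplicatively (yielding the square-root form of $m$) rather than additively.
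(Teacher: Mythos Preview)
Your Pohozaev identity with multiplier $x\cdot\nabla u$ and the Rellich companion are both correct, and the algebraic reduction of $H\ge mD$ to
\[
\int_{S_1}\bigl(u_\nu-\alpha u\bigr)\iprod{\A\nabla u}{\nu}\,d\sigma\ \ge\ 0
\]
is valid. The problem is that this last inequality is \emph{equivalent} to the target (as your own computation $2I=H-mD$ shows), so nothing has actually been reduced: the whole content of the theorem is now packed into Step~4, which you leave open. You correctly diagnose that the naive Cauchy--Schwarz loses the constant for $n\ge 3$, but you do not provide the replacement mechanism.

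The paper closes the argument through two choices that differ from yours. First, it uses the multiplier $\iprod{\A x}{\nabla u}$ rather than $\iprod{x}{\nabla u}$; the resulting Pohozaev identity carries the boundary term $\int_{S_1}\iprod{\A\nabla u}{\nu}^2\,d\sigma$, a perfect square, in place of your mixed product $\int_{S_1}u_\nu\iprod{\A\nabla u}{\nu}\,d\sigma$. Second, instead of the plain $\A$-Cauchy--Schwarz $\iprod{\A\nabla u}{\nu}^2\le\iprod{\A\nu}{\nu}\iprod{\A\nabla u}{\nabla u}$ that you list, it uses the strict Schur-complement refinement
\[
\iprod{\A\nabla u}{\nu}^2\ \le\ \iprod{\A\nu}{\nu}\Bigl(\iprod{\A\nabla u}{\nabla u}-\la\,|\nabla_T u|^2\Bigr).
\]
The extra $-\la|\nabla_T u|^2$ is precisely what links the Poincar\'e term to the square boundary term from the $\A x$-Pohozaev. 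One then writes
\[
D\ \le\ \frac{\ve}{2(n-1)}\int_{S_1}|\nabla_T u|^2\,d\sigma+\frac{1}{2\ve}\int_{S_1}\iprod{\A\nabla u}{\nu}^2\,d\sigma,
\]
feeds the Schur inequality into the first term and the $\A x$-Pohozaev into the second, and optimizes over $\ve$; the sharp $m$ is exactly the optimal value. Your $x$-Pohozaev does not produce $\int_{S_1}\iprod{\A\nabla u}{\nu}^2$, and your Cauchy--Schwarz lacks the $-\la|\nabla_T u|^2$ correction, so as written the proposal is missing both ingredients needed to control the cross term you flag.
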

	
	\begin{remark}
		In the rest of the paper, we will assume $\A(x)$ and $u$ are smooth and obtain a priori estimates. Such an assumption can be made to hold by a standard approximation procedure, see \cite[Section 4]{delellis2017masterpieces} for the details. The proof also shows that for general matrices $\A(x)$, we can obtain the same regularity as \cref{mainthm} provided $\err \geq 0$, where $\err$ is as obtained in \cref{poho_gen}. 
	\end{remark}
	
	\section{Ideas of Piccinini-Spagnolo revisited}
	In \cite{MR361422}, the authors proved the following result: 
	\begin{theorem}\label{thm2.1}
		When $n=2$, weak solutions of \cref{maineq} are $C^{\alpha}_{\loc}$ regular with $\al = \sqrt{\tfrac{{\la}}{{\La}}}$. In particular, the following estimate holds for any $r >0$:
		\begin{equation}\label{a2.1}
			\int_{B_r} \iprod{\A(x)\nabla u}{\nabla u}\, dx \leq \frac{r{\sqrt{\frac{\La}{\la}}}}{2}\int_{S_r} \iprod{\A(x)\nabla u}{\nabla u}\, d\sigma,
		\end{equation}
		where $B_r$ is the ball of raidus $r$ and $S_r= \partial B_r$  is the boundary of $B_r$. 
	\end{theorem}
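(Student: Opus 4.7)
The plan is to establish \cref{a2.1} directly; the $C^{\al}_{\loc}$ conclusion follows automatically. Setting $E(r):=\int_{B_r}\iprod{\A(x)\nabla u}{\nabla u}\,dx$, the coarea formula gives $E'(r)=\int_{S_r}\iprod{\A(x)\nabla u}{\nabla u}\,d\sigma$, so \cref{a2.1} is equivalent to the differential inequality $2\al E(r)\le rE'(r)$ with $\al=\sqrt{\la/\La}$. A Gr\"onwall integration then shows $r\mapsto E(r)/r^{2\al}$ is nondecreasing, which by a standard Morrey--Campanato embedding yields $u\in C^{\al}_{\loc}$.

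To prove \cref{a2.1}, I would test the equation by $u-\bar u$ on $B_r$, where $\bar u:=|S_r|^{-1}\int_{S_r}u\,d\sigma$ is the spherical mean; subtracting the mean is legal because $\int_{S_r}\iprod{\A\nabla u}{\hat r}\,d\sigma=0$ by the divergence theorem. This produces the energy identity
\[
E(r)\;=\;\int_{S_r}(u-\bar u)\,\iprod{\A(x)\nabla u}{\hat r}\,d\sigma.
\]
Three ingredients on each sphere $S_r$ can then be combined: the matrix Cauchy--Schwarz inequality $\iprod{\A\nabla u}{\hat r}^2\le \La\,\iprod{\A\nabla u}{\nabla u}$, the $\la$-ellipticity $\la u_\tau^2\le\iprod{\A\nabla u}{\nabla u}$ (where $u_\tau$ denotes the unit tangential derivative on $S_r$), and the sharp Poincar\'e--Wirtinger inequality on the circle $\|u-\bar u\|_{L^2(S_r)}\le r\|u_\tau\|_{L^2(S_r)}$ (constant $1$ on zero-mean functions). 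A straight Cauchy--Schwarz applied to the displayed identity, combined with these bounds, already gives $E(r)\le r\sqrt{\La/\la}\,E'(r)$ --- enough for H\"older regularity, but off from \cref{a2.1} by a factor of $2$.

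The main obstacle is the recovery of the sharp factor $\tfrac12$. It cannot be extracted from the three ingredients above in isolation: simultaneous saturation of the matrix Cauchy--Schwarz (which forces $\A\nabla u$ aligned with $\hat r$) and of Wirtinger (which forces $u-\bar u$ to be a pure first Fourier mode) is incompatible with $\A$-harmonicity for nontrivial $u$. For the constant-coefficient case $\A(x)\equiv \A$, a clean route is the change of variable $y=\A^{-1/2}x$, which converts $\A$-harmonic $u$ into a harmonic $\tilde u$ on an ellipse of eccentricity $\sqrt{\La/\la}$; separation of variables in elliptic coordinates then identifies the extremal mode and produces the sharp constant. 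In the variable-coefficient case, where this reduction is unavailable, Piccinini--Spagnolo instead proceed via an optimised hole-filling argument: they test the equation against $\phi(|x|)(u-\bar u)$ for a radial cutoff $\phi$ tuned so that the resulting spherical quadratic form balances the contributions of the radial flux $\iprod{\A\nabla u}{\hat r}$ and the tangential derivative $u_\tau$ to $E'(r)$, and then integrate the resulting Bernoulli-type inequality in $E(r)$. Handling the indefinite cross term $\int_{S_r}a_{r\tau}u u_\tau\,d\sigma$ arising from integration by parts on the circle --- where $\partial_\tau a_{r\tau}$ has no sign --- is the principal technical hurdle; it is neutralised by integrating against the optimal cutoff rather than a generic one, the optimisation being what ultimately fixes the constant $\tfrac{r}{2}\sqrt{\La/\la}$.
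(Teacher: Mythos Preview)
Your setup is correct: the energy identity $E(r)=\int_{S_r}(u-\bar u)\iprod{\A\nabla u}{\hat r}\,d\sigma$, the relation $E'(r)=\int_{S_r}\iprod{\A\nabla u}{\nabla u}\,d\sigma$, and the observation that a naive Cauchy--Schwarz loses the factor $\tfrac12$ are all accurate. The gap is in your third paragraph, where you speculate about the mechanism that recovers that factor.

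The paper does \emph{not} test against a radial cutoff $\phi(|x|)(u-\bar u)$, does not integrate a Bernoulli-type inequality, and does not integrate by parts in $\theta$ to produce a cross term $\int_{S_r}a_{r\tau}\,u\,u_\tau\,d\sigma$; none of that machinery appears. The recovery of $\tfrac12$ is a purely algebraic manipulation on each fixed circle $S_r$. Writing the conormal derivative in polar form as $\iprod{\A\nabla u}{\hat r}=p_{11}u_N+p_{12}u_T$ with $\PP=\mathbb{J}^{\top}\A\,\mathbb{J}$, one splits
\[
(u-\bar u)(p_{11}u_N+p_{12}u_T)=\bigl[\sqrt{p_{11}}\,(u-\bar u)\bigr]\cdot\bigl[\sqrt{p_{11}}\,u_N+\tfrac{p_{12}}{\sqrt{p_{11}}}\,u_T\bigr].
\]
Cauchy--Schwarz on $S_r$, followed by $p_{11}\le\La$ and Wirtinger on the first factor, gives
\[
E(r)\le r\sqrt{\tfrac{\La}{\la}}\Bigl(\int_{S_r}\la\,u_T^2\,d\sigma\Bigr)^{1/2}\Bigl(\int_{S_r}\bigl(\sqrt{p_{11}}\,u_N+\tfrac{p_{12}}{\sqrt{p_{11}}}\,u_T\bigr)^2\,d\sigma\Bigr)^{1/2}.
\]
Now use the Schur-complement bound $\la\le p_{22}-p_{12}^2/p_{11}$ (a sharpening of your ellipticity ingredient $\la u_\tau^2\le\iprod{\A\nabla u}{\nabla u}$) on the first integrand, and apply Young's inequality $ab\le\tfrac12(a^2+b^2)$. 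The two squares recombine \emph{exactly} into $\int_{S_r}\bigl(p_{11}u_N^2+2p_{12}u_Nu_T+p_{22}u_T^2\bigr)\,d\sigma=E'(r)$, yielding $E(r)\le\tfrac{r}{2}\sqrt{\La/\la}\,E'(r)$.

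So your diagnosis that the sharp constant ``cannot be extracted from the three ingredients above in isolation'' is incorrect: it can, once the Cauchy--Schwarz is weighted by $p_{11}$ and the ellipticity is used in its Schur-complement form. The $\tfrac12$ is simply the $\tfrac12$ in Young's inequality; the weight $p_{11}$ is chosen precisely so that the resulting quadratic pieces reassemble into the full $\iprod{\PP\nbar u}{\nbar u}$ without loss. Your proposed route via an optimised cutoff is not needed and, as described, is not a proof.
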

	
	\begin{proof}
		We shall change to polar co-ordinates and use the following notation:
		\[
		\begin{bmatrix}
			x\\y 
		\end{bmatrix}
		=\begin{bmatrix}
			r \cos (\theta) \\
			r \sin(\theta) 
		\end{bmatrix},
		\quad 
		\mathbb{J}(\theta)=\begin{bmatrix}
			\cos(\theta) & -\sin(\theta)\\
			\sin(\theta) & \cos(\theta)
		\end{bmatrix},
		\quad 
		\PP(x) = \mathbb{J}(\theta)^\top \A(x) \mathbb{J}(\theta), \quad 
		\underbrace{\begin{bmatrix}
				u_x \\ u_y
		\end{bmatrix}}_{:=\nabla u}
		=\mathbb{J}(\theta)\underbrace{\begin{bmatrix}
				u_{r} \\ \tfrac{1}{r} u_{\theta}
		\end{bmatrix}}_{:=\nbar u} := \begin{bmatrix}
			u_{N} \\ u_{T}
		\end{bmatrix}.
		\]
		Following \cite{MR361422}, we have
		\begin{equation}\label{a2.2}
			\begin{array}{rcl}
				g(r) &:=&	\int_{B_r} \iprod{\A(x)\nabla u}{\nabla u}\, dx = \int_{S_r} (u-k) \iprod{\PP(x)\nbar u}{\vec{e_1}}\, d\sigma = \int_{S_r} (u-k) \lbr p_{11} u_{\rho} + p_{12} \tfrac{1}{r}u_{\theta} \rbr d\sigma, \\
				g'(r) &:=& \int_{S_r} \iprod{\PP(x)\nbar u}{e_1}\, d\sigma,
			\end{array}
		\end{equation}
		where to obtain the first equality in the definition of $g(r)$, we made use of the fact that $u$ solves \cref{maineq}.

		With the choice $k = \fint_{S_r} u \,d\sigma$ and $L:= \tfrac{\La}{\la}$, we shall estimate $g(r)$ as follows:
		\begin{equation}\label{a2.3}
			\begin{array}{rcl}
				g(r) & \overset{\redlabel{a2.3a}{a}}{\leq} & \lbr \int_{S_r} p_{11} (u-k)^2 \,d\sigma \rbr^{\frac{1}{2}} \lbr \int_{S_r}  \lbr\sqrt{p_{11}} u_{r} + \frac{p_{12}}{\sqrt{p_{11}}} \frac{1}{r} u_{\theta}\rbr^2 \,d\sigma\rbr^{\frac{1}{2}}\\
				&\overset{\redlabel{a2.3b}{b}}{\leq} & r \sqrt{L} \lbr \la  \int_{S_r}  u_T^2 \,d\sigma \rbr^{\frac{1}{2}} \lbr \int_{S_r}  \lbr\sqrt{p_{11}} u_{N} + \frac{p_{12}}{\sqrt{p_{11}}} u_T\rbr^2 \,d\sigma\rbr^{\frac{1}{2}}\\
				&\overset{\redlabel{a2.3c}{c}}{\leq} &\frac{r\sqrt{L}}{2} \lbr[[] \int_{S_r}  \lbr p_{22} - \frac{p_{12}^2}{p_{11}}\rbr u_T^2 \,d\sigma  + \int_{S_r}  \lbr p_{11} u_N^2 + \frac{p_{12}^2}{p_{11}} u_T^2 + 2p_{12} u_N u_T  \rbr \,d\sigma\rbr[]]\\
				&= & \frac{r\sqrt{L}}{2} \int_{S_r} \iprod{P \nbar u}{\nbar u}\,d\sigma =  \frac{r\sqrt{L}}{2}  g'(r),
			\end{array}
		\end{equation}
		where to obtain \redref{a2.3a}{a}, we applied H\"older's inequality, to obtain \redref{a2.3b}{b}, we applied \cref{poincare} along with the bound $\la \leq p_{11}(x) \leq \La$ and finally  to obtain \redref{a2.3c}{c}, we made use of the matrix inequality $\la \leq p_{22}(x) - \frac{p_{12}^2(x)}{p_{11}(x)} \leq \La$ along with Young's inequality. Recalling \cref{a2.2}, the estimate in \cref{a2.3} is exactly the claim in \cref{a2.1} and this completes the proof of the theorem.
		
	\end{proof}
	\section{Proof of \texorpdfstring{\cref{mainthm2}}. when \texorpdfstring{$\A(x) = \mathbb{I}$}. is identity matrix - case of harmonic functions}
	
	Let us first recall the Poincar\'e inequality on spheres (see \cite[Chapter IV, Section 2]{MR0304972} for the details):
	\begin{proposition} \label{poincare}
		Let $u \in W^{1,2}_{\loc}(S_r)$ where $S_r$ is the unit sphere in $\RR^n$ with $n \geq 2$ and $\nbar u = (u_N,u_{T})$ be the tangential and normal derivative of $u$. Then the following sharp form of Poincar\'e's inequality holds:
		\[
		\int_{S_r} ( u- k)^2 \,d\sigma \leq  \lbr \frac{r^2}{n-1}\rbr\int_{S_r} |u_T|^2 \,d\sigma,
		\]
		where $k= \fint_{S_r} u \,d\sigma$.
		
	\end{proposition}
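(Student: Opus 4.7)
The first step is a scaling reduction to the unit sphere $S^{n-1}$. Writing $x = r y$ for $y \in S^{n-1}$ and setting $\tilde u(y) = u(ry)$, the surface measure scales as $d\sigma = r^{n-1}\, d\sigma_1$, and a tangential derivative on $S_r$ corresponds to $r^{-1}$ times a tangential derivative on $S^{n-1}$, so that $|u_T(x)|^2 = r^{-2}|\nbar_T \tilde u(y)|^2$. The claimed inequality is therefore equivalent to the sharp estimate
\[
\int_{S^{n-1}} (\tilde u - \bar k)^2 \, d\sigma_1 \;\leq\; \frac{1}{n-1}\int_{S^{n-1}} |\nbar_T \tilde u|^2 \, d\sigma_1,
\]
where $\bar k = \fint_{S^{n-1}} \tilde u \, d\sigma_1$, and so from here on I would work only on the unit sphere.

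The cleanest route to this inequality is via the spectral decomposition of the Laplace--Beltrami operator $\Delta_{S^{n-1}}$. Its spectrum consists of the eigenvalues $\lambda_k = k(k+n-2)$ for $k = 0, 1, 2, \dots$, with eigenspaces given by the spherical harmonics of degree $k$; this is a classical consequence of restricting harmonic homogeneous polynomials from $\mathbb{R}^n$ to $S^{n-1}$. Expanding $\tilde u = \sum_{k \geq 0} \sum_j c_{k,j} Y_{k,j}$, the mean-value condition $\bar k = \fint \tilde u\, d\sigma_1$ is exactly the statement that $\tilde u - \bar k$ has no degree-zero component. The identities $\int |\tilde u - \bar k|^2 \, d\sigma_1 = \sum_{k \geq 1} \sum_j |c_{k,j}|^2$ and $\int |\nbar_T \tilde u|^2 \, d\sigma_1 = \sum_{k\geq 1} \lambda_k \sum_j |c_{k,j}|^2$, combined with $\lambda_k \geq \lambda_1 = n-1$, deliver the desired bound.

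Sharpness of the constant is automatic once the spectrum is identified: any linear coordinate function $y_i$ restricted to $S^{n-1}$ is a degree-one spherical harmonic with zero spherical mean, and satisfies $-\Delta_{S^{n-1}} y_i = (n-1) y_i$, so the Rayleigh quotient attains its infimum $n-1$ on such functions. This shows that $\tfrac{r^2}{n-1}$ cannot be lowered for any $r > 0$.

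The only substantive input is the identification of $\lambda_1 = n-1$ as the first nonzero eigenvalue of $-\Delta_{S^{n-1}}$, which is the main (though classical) obstacle one needs to invoke; a reader-friendly self-contained alternative would be to check directly by a coordinate computation that $-\Delta_{S^{n-1}} y_i = (n-1) y_i$ and combine this with the standard Rayleigh-quotient variational characterization of the first eigenvalue together with the fact that linear functions span the space orthogonal to constants realizing this quotient. Either route yields the sharp constant, completing the proof.
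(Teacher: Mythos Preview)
Your argument is correct. Note, however, that the paper does not supply its own proof of this proposition: it simply cites \cite[Chapter IV, Section 2]{MR0304972} (Stein--Weiss), whose content is precisely the spherical-harmonic decomposition and the identification of the spectrum of $-\Delta_{S^{n-1}}$ that you invoke. So your proposal is both correct and in line with the reference the paper defers to; there is nothing to compare against beyond that citation.
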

	
	We shall also recall the well known Poho\v{z}aev identity for harmonic functions obtained in \cite{MR0192184}.
	\begin{proposition}\label{pohozaev}
		Let $u$ to be a smooth, weak solution of $-\Delta u =0$ (i.e., $u$ is locally  harmonic), then the following identity holds:
		\begin{equation*}
			\int_{S_1} |u_T|^2 \,d\sigma = \int_{S_1} |u_N|^2   \,d\sigma + (n-2) \int_{B_1} |\nabla u|^2 \ dx.
		\end{equation*}
	\end{proposition}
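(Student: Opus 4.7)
The plan is to derive the identity by the standard Poho\v{z}aev multiplier argument: multiply the equation $-\Delta u = 0$ by the vector-field multiplier $x\cdot\nabla u$, integrate over $B_1$, and perform integration by parts. Since on the unit sphere $S_1$ the outward unit normal is $x$ itself, one has $x\cdot\nabla u = u_N$ on $S_1$, which is what eventually converts the boundary terms into the required splitting of $|\nabla u|^2$ into normal and tangential components.

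Concretely, I would start from
\[
0 = -\int_{B_1}(\Delta u)(x\cdot\nabla u)\,dx,
\]
and integrate by parts to move one derivative off $\Delta u$:
\[
0 = -\int_{S_1} u_N(x\cdot\nabla u)\,d\sigma + \int_{B_1}\nabla u\cdot\nabla(x\cdot\nabla u)\,dx.
\]
Since $\nabla(x\cdot\nabla u)=\nabla u+\sum_i x_i \nabla u_{x_i}$, one has $\nabla u\cdot\nabla(x\cdot\nabla u)=|\nabla u|^2+\tfrac{1}{2}x\cdot\nabla(|\nabla u|^2)$. A second integration by parts on the last term, using $\dv x = n$, produces
\[
\int_{B_1}\tfrac{1}{2}x\cdot\nabla(|\nabla u|^2)\,dx = \tfrac{1}{2}\int_{S_1}|\nabla u|^2\,d\sigma -\tfrac{n}{2}\int_{B_1}|\nabla u|^2\,dx.
\]

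Combining these and using the sphere identity $x\cdot\nabla u = u_N$ and $|\nabla u|^2 = u_N^2+u_T^2$ on $S_1$, the mixed boundary contribution $\int_{S_1} u_N(x\cdot\nabla u)\,d\sigma = \int_{S_1} u_N^2\,d\sigma$ combines with $\tfrac{1}{2}\int_{S_1}|\nabla u|^2\,d\sigma$ to yield exactly
\[
\tfrac{1}{2}\int_{S_1} u_T^2\,d\sigma - \tfrac{1}{2}\int_{S_1} u_N^2\,d\sigma = \tfrac{n-2}{2}\int_{B_1}|\nabla u|^2\,dx,
\]
which is the desired identity after multiplying by $2$.

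There is no real obstacle; the argument is entirely computational and relies only on smoothness of $u$ (which has been assumed in the remark preceding the statement, via approximation). The only point requiring mild care is the bookkeeping of signs when splitting $|\nabla u|^2$ on $S_1$ and cancelling against the $\tfrac{n}{2}\int_{B_1}|\nabla u|^2$ and $\int_{B_1}|\nabla u|^2$ terms, but these are routine.
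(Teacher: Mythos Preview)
Your argument is correct and is essentially the same as the paper's proof: the paper packages your two integrations by parts into the single pointwise identity $\Delta u\,\langle\nabla u,x\rangle=\dv(\nabla u\,\langle\nabla u,x\rangle)-\tfrac12\dv(|\nabla u|^2 x)+\tfrac{n-2}{2}|\nabla u|^2$ and then integrates once, but the computation and the resulting boundary terms are identical to yours.
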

	\begin{proof}
		Simple calculations implies
		\begin{equation}\label{aa3.1}
			\Delta u \iprod{\nabla u}{x} = \dv(\nabla u \iprod{\nabla u}{x}) - \frac{1}{2} \dv (|\nabla u|^2 x) + \frac{n-2}{2} |\nabla u|^2.
		\end{equation}
		Using $-\Delta u =0$ along with integrating \cref{aa3.1} over $B_1$, we have the following identity
		\begin{equation*}
			\begin{array}{rcl}
				0 = \int_{B_1}\Delta u \iprod{\nabla u}{x} \, dx  & = &  \int_{S_1} \iprod{\nabla u}{x}^2 \,d\sigma - \frac12 \int_{S_1} |\nabla u|^2 \iprod{x}{x} \,d\sigma + \frac{n-2}{2} \int_{B_1} |\nabla u|^2 \,dx \\
				& = &\int_{S_1} |u_N|^2 \,d\sigma - \frac{1}2 \int_{S_1} (|u_T|^2 + |u_N|^2)  \,d\sigma + \frac{n-2}{2} \int_{B_1} |\nabla u|^2 \,dx,
			\end{array}
		\end{equation*}
		which completes the proof. 
	\end{proof}

	\subsection{Some discussion}

	If we consider solutions of $-\Delta u =0$ in $\RR^n$ (i.e., $\A(x) = I$) for $n \geq 3$ and try to  follow the calculations of \cref{thm2.1}, then we get 
	\begin{equation}\label{a3.1}
		\int_{B_r} \iprod{\nabla u}{\nabla u}\, dx \leq \frac{r}{2\sqrt{n-1}}\int_{S_r} \iprod{\nabla u}{\nabla u}\, d\sigma,
	\end{equation}
	which fails to prove Lipschitz regularity (i.e., the constant in \cref{a3.1} should be $\tfrac{r}{n}$) when $n=3$ and for $n \geq 7$, we have $2 \sqrt{n-1} < n-2$ and thus no regularity follows. 
	
	On the other hand, if we consider the simple  example of harmonic functions  which satisfy $|\nabla u| = C$ for some constant $C$, then we have 
	\[
	\frac{\int_{B_r} \iprod{\nabla u}{\nabla u}\, dx}{\int_{S_r} \iprod{\nabla u}{\nabla u}\, d\sigma} = \frac{\omega_nr^n}{n \om_{n}r^{n-1}} = \frac{r}{n},
	\]
	which implies Lipschitz regularity. But the above calculations shows that the factor $\tfrac{1}{n}$ is coming from the ratio of $|B_r|$ and $|S_r|$ which is a property of the fact that we are in $\RR^n$ and not necessarily from the property of the equation and solution. 
	
	This suggest that some additional cancellations should hold that can further improve \cref{a3.1} and these cancellations should be obtained from the fact that we are in $\RR^n$. We shall formalize this observation for harmonic functions in the following theorem:
	
	\begin{theorem}\label{thm3.1}
		Let $u$ be a local weak solution of $-\Delta u = 0$, then  we have 
		\[
		\int_{B_r} \iprod{\nabla u}{\nabla u}\, dx \leq \frac{r}{n}\int_{S_r} \iprod{\nabla u}{\nabla u}\, d\sigma,\]
		holds for all $r >0$ and in particular $u \in C^{0,1}_{\loc}$.
	\end{theorem}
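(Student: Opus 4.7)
The plan is to follow the template used in the proof of \cref{thm2.1}, augmented by the Poho\v{z}aev identity (\cref{pohozaev}) to recover the information discarded by the AM--GM step of the two-dimensional argument. Throughout, write $g(r) := \int_{B_r} |\na u|^2\,dx$, $N(r) := \int_{S_r} |u_N|^2\,d\sigma$, and $T(r) := \int_{S_r} |u_T|^2\,d\sigma$, so that $g'(r) = T(r) + N(r)$.

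First, since $\Delta u = 0$, the pointwise identity $\dv((u-k)\na u) = |\na u|^2$ combined with the divergence theorem gives $g(r) = \int_{S_r} (u-k)\,u_N\,d\sigma$ for every constant $k$. Choosing $k$ to be the spherical average of $u$ on $S_r$, then applying the Cauchy--Schwarz inequality followed by the sharp spherical Poincar\'e inequality of \cref{poincare}, one obtains the preliminary bound
\[
g(r) \,\le\, \frac{r}{\sqrt{n-1}}\,\sqrt{T(r)\,N(r)}.
\]
This is the same starting estimate which, after a naive AM--GM split of $\sqrt{T N}$, produces the suboptimal bound \cref{a3.1}.

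The key new step is to use the Poho\v{z}aev identity. A straightforward rescaling of \cref{pohozaev} to $B_r$ gives
\[
T(r) \,=\, N(r) + \tfrac{n-2}{r}\,g(r).
\]
Inserting this into the preliminary bound and squaring leads to the quadratic inequality
\[
(n-1)\,x^2 - (n-2)\,x - 1 \,\le\, 0, \qquad x := \frac{g(r)}{r\,N(r)},
\]
with the degenerate case $N(r) = 0$ handled separately (it forces $u$ to be constant on $B_r$). The larger root of the quadratic is exactly $x = 1$, so $g(r) \le r\,N(r)$. An elementary rearrangement using the Poho\v{z}aev identity one more time converts this into the desired
\[
g(r) \,\le\, \frac{r}{n}\,\bigl(T(r) + N(r)\bigr) \,=\, \frac{r}{n}\int_{S_r} |\na u|^2\,d\sigma.
\]

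Finally, the Lipschitz regularity $u \in C^{0,1}_{\loc}$ follows from the differential inequality $n\,g(r) \le r\,g'(r)$, which implies that $r \mapsto g(r)/r^n$ is monotone non-decreasing; Lebesgue differentiation then yields local boundedness of $|\na u|$. The principal conceptual obstacle I anticipate is recognizing that the Poho\v{z}aev identity supplies a \emph{second} linear relation among $T(r)$, $N(r)$, and $g(r)$, which pins down the ratio of tangential to normal energies enforced by the PDE and thereby replaces the lossy AM--GM step of the Piccinini--Spagnolo scheme; this is precisely what produces the sharp constant $1/n$.
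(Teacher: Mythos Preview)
Your proof is correct and follows essentially the same route as the paper's first proof of \cref{thm3.1}: both combine the boundary identity $g(r)=\int_{S_r}(u-k)u_N\,d\sigma$, the sharp spherical Poincar\'e inequality, and the Poho\v{z}aev identity (used twice) to first establish the intermediate bound $g(r)\le r\,N(r)$ (the paper's \cref{a3.8}) and then upgrade it to $g(r)\le \tfrac{r}{n}\,g'(r)$. The only cosmetic difference is that you keep the Cauchy--Schwarz estimate $g\le \tfrac{r}{\sqrt{n-1}}\sqrt{TN}$ and solve a quadratic in $x=g/(rN)$, whereas the paper applies Young's inequality first and works with the resulting linear relation; both yield the same intermediate conclusion with the same constant.
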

	
	\subsection{Discussion of the two proofs}
	
	We shall give two proofs of \cref{thm3.1}, both very similar, but have some crucial differences which are detailed in the following observations: 
	\begin{enumerate}[(i)]
		\item In the first proof, we need to make use of \cref{a3.7} to obtain \cref{a3.7b}. This does not require equality to hold in \cref{a3.7}, but it suffices if the following inequality holds:
		\begin{equation}\label{ineqlow}
			\int_{S_1} |u_T|^2 \,d\sigma \leq \int_{S_1} |u_N|^2   \,d\sigma + (n-2) \int_{B_1} |\nabla u|^2 \ dx.
		\end{equation}
		\item In the first proof, we need to make use of \cref{a3.7} a second time in \cref{a3.8aa}. This again does not require equality to hold in \cref{a3.7}, but it suffices if the following inequality holds:
		\begin{equation}\label{ineqhigh}
			\int_{S_1} |u_T|^2 \,d\sigma \geq \int_{S_1} |u_N|^2   \,d\sigma + (n-2) \int_{B_1} |\nabla u|^2 \ dx.
		\end{equation}
		\item In particular, the first proof requires \cref{ineqhigh} and \cref{ineqlow} to both hold, which is the equality form of the identity obtained in \cref{pohozaev}.
		
		\item In the second proof, we only need to use the Poho\v{z}aev identity \cref{a3.9} once to obtain \cref{a3.10}. In particular, the second proof works if we only  had the following one sided inequality:
		\begin{equation*}
			\int_{S_1} |\nabla u|^2 \,d\sigma \geq 2\int_{S_1} |u_N|^2   \,d\sigma + (n-2) \int_{B_1} |\nabla u|^2 \ dx.
		\end{equation*}
		
	\end{enumerate}
	
	\subsection{First proof}
	We shall use the notation $\nbar u = (u_N, u_T)$  where $u_N = \iprod{\nbar u}{\vec{e_1}}$  and $u_T = (\iprod{\nbar u}{\vec{e_2}},\ldots, \iprod{\nbar u}{\vec{e_n}})$ are the normal and tangential derivatives on the sphere respectively in polar coordinates.  We can also take $r=1$ since the required estimate can be obtained by scaling. 
	
	\begin{remark} We will be deliberately careless in switching between $\nabla u$ and $\nbar u$, i.e., cartesian and polar coordinates in the proof, so as to better present the ideas of the proof, noting that this switch is just a transformation by orthogonal matrices and hence does not affect any of the calculations.  
	\end{remark} 
	The proof will now proceed in several steps:
	\begin{description}[leftmargin=*]
		\item[Step 1:] 	Since $u$ is harmonic, we see that 
		\begin{equation}\label{a3.2}
			\begin{array}{rcl}
				\int_{B_1} \iprod{\nabla u}{\nabla u} \,dx & = &  \int_{S_1} (u-k) \iprod{\nabla u}{\vec{n}} \,d\sigma = \int_{S_1} (u-k) \iprod{\nbar u}{\vec{e_1}} \,d\sigma \\
				& \overset{\redlabel{a3.2a}{a}}{\leq} &  \lbr \frac{1}{2(n-1)} \int_{S_1} |u_{T}|^2 \,d\sigma + \frac{1}{2} \int_{S_1} |u_N|^2\,d\sigma\rbr,\\		 
			\end{array}
		\end{equation}
		where to obtain \redref{a3.2a}{a}, we applied Young's inequality followed by \cref{poincare} with $k = \fint_{S_1} u \,d\sigma$. 
		\item[Step 2:] We recall  Poho\v{z}aev identity from \cref{pohozaev}:
		\begin{equation}\label{a3.7}
			\int_{S_1} |u_T|^2 \,d\sigma = \int_{S_1} |u_N|^2   \,d\sigma + (n-2) \int_{B_1} |\nabla u|^2 \ dx.
		\end{equation}
		\item[Step 3:] We shall substitute \cref{a3.7} into \cref{a3.2} and obtain
		\begin{equation}\label{a3.7b}
			\int_{B_1} |\nabla u|^2 \ dx \leq \frac{1}{2(n-1)} \lbr \int_{S_1} |u_N|^2 \, d\sigma + (n-2) \int_{B_1} |\nabla u|^2 \, dx \rbr + \frac{1}{2} \int_{S_1} |u_N|^2 \,d\sigma,
		\end{equation}
		which after simplification, becomes
		\begin{equation}\label{a3.8}
			\int_{B_1} |\nabla u|^2 \ dx  \leq  \int_{S_1} |u_N|^2 \,d\sigma.
		\end{equation}
		
		\item[Step 4:]
		Let us add $\int_{S_1} |u_N|^2 \,d\sigma$ to both sides of \cref{a3.7} and then make use of \cref{a3.8} to get
		\begin{equation}\label{a3.8aa}
			\begin{array}{rcl}
				\int_{S_1} |\nabla u|^2 \,d\sigma = \int_{S_1} (|u_T|^2 +|u_N|^2) \,d\sigma & \overset{\cref{a3.7}}{=} & 2\int_{S_1} |u_N|^2   \,d\sigma + (n-2) \int_{B_1} |\nabla u|^2 \ dx\\
				& \overset{\cref{a3.8}}{\geq} &2 \int_{B_1} |\nabla u|^2 \ dx  + (n-2)\int_{B_1} |\nabla u|^2 \ dx \\ &=& n \int_{B_1}|\nabla u|^2 \, dx.
			\end{array}
		\end{equation}
	\end{description}
	This completes the proof of the theorem.
	
	\subsection{Second proof}
	Using the same notation as the first proof, we will give a slightly modified proof of \cref{thm3.1} which will proceed in several steps:
	\begin{description}[leftmargin=*]
		\item[Step 1:] 	Since $u$ is harmonic, we see that 
		\begin{equation}\label{a3.8b}
			\begin{array}{rcl}
				\int_{B_1} \iprod{\nabla u}{\nabla u} \,dx & = &  \int_{S_1} (u-k) \iprod{\nabla u}{\vec{n}} \,d\sigma = \int_{S_1} (u-k) \iprod{\nbar u}{\vec{e_1}} \,d\sigma \\
				& \overset{\redlabel{a3.8ba}{a}}{\leq} &  \lbr \frac{1}{2(n-1)} \int_{S_1} |u_{T}|^2 \,d\sigma + \frac{1}{2} \int_{S_1} |u_N|^2\,d\sigma\rbr\\
				& \overset{\redlabel{a3.8bb}{b}}{=} &\lbr \frac{1}{2(n-1)}\int_{S_1} |\nabla u|^2 \,d\sigma + \lbr \frac{1}{2}-\frac{1}{2(n-1)}\rbr \int_{S_1} |u_N|^2\,d\sigma\rbr,
			\end{array}
		\end{equation}
		where to obtain \redref{a3.8ba}{a}, we applied Young's inequality followed by \cref{poincare} with $k = \fint_{S_1} u \,d\sigma$ and to obtain \redref{a3.8bb}{b}, we add and subtract $\frac{1}{2(n-1)}\int_{S_1} |u_N|^2 \,d\sigma$. 
		\item[Step 2:] We rewrite the  Poho\v{z}aev identity from \cref{pohozaev} as:
		\begin{equation}\label{a3.9}
			\int_{S_1} |\nabla u|^2 \,d\sigma = 2\int_{S_1} |u_N|^2   \,d\sigma + (n-2) \int_{B_1} |\nabla u|^2 \ dx.
		\end{equation}
		\item[Step 3:] We shall substitute \cref{a3.9} into \cref{a3.8b} to get
		\begin{equation}\label{a3.10}
			\int_{B_1} |\nabla u|^2 \ dx \leq \frac{1}{2(n-1)}\int_{S_1} |\nabla u|^2 \, d\sigma + \lbr \frac{1}{2}-\frac{1}{2(n-1)}\rbr \frac12\lbr \int_{S_1} |\nabla u|^2 \,d\sigma - (n-2) \int_{B_1}|\nabla u|^2 \,dx  \rbr,
		\end{equation}
		which after simplification, becomes 
		\begin{equation*}
			\frac{n^2}{4(n-1)}\int_{B_1} |\nabla u|^2 \ dx  \leq  \frac{n}{4(n-1)}\int_{S_1} |\nabla u|^2 \,d\sigma.
		\end{equation*}
	\end{description}
	This completes the proof of the lemma.

	\section{Proof of \texorpdfstring{\cref{mainthm2}}. when \texorpdfstring{$\A(x) \equiv \A$}. has constant entries}
	
	The first lemma we generalize is the Poho\v{z}aev identity from \cref{pohozaev} to variable coefficient operators of the form considered in \cref{maineq}. This is well known in literature and we present all the details for sake of completeness. 
	\begin{lemma}\label{poho_gen}
		Let $u$ be a weak solution to \cref{maineq} and assume both $u$ and $\A(x)$ are sufficiently smooth. Then the following Poho\v{z}aev type identity holds:
		\begin{equation}\label{eq4.1}
			\begin{array}{rcl}
				\int_{S_1} \iprod{\A(x)\nabla u}{\nabla u} \iprod{\A(x)x}{x} \,d\sigma & = & 	2 \int_{S_1} \iprod{\A(x)\nabla u}{x}^2\,d\sigma 
				+ \int_{B_1} \lbr \sum_{i}^n a_{ii}(x)\rbr \iprod{\A(x)\nabla u }{\nabla u}\,dx \\
				&& + \sum_{i,j,k,l}  \int_{B_1} x_j (\pa_{x_i} a_{ij}) a_{kl}u_{x_k}u_{x_l}\,dx 
				- 2\int_{B_1} \iprod{\A(x)\nabla u}{\A(x)\nabla u}\,dx\\
				&& - 2\sum_{i,j,k,l}\int_{B_1}  u_{x_i} a_{ij} u_{x_k}  x_l (\pa_{x_j} a_{kl})\,dx \\
				&&
				+ \sum_{i,j,m,n}\int_{B_1} a_{ij}x_ju_{x_m}u_{x_n} (\pa_{x_i} a_{mn})\,dx\\
				& = & 	2 \int_{S_1} \iprod{\A(x)\nabla u}{x}^2\,d\sigma 
				+ \int_{B_1} \lbr \sum_{i}^n a_{ii}(x)\rbr \iprod{\A(x)\nabla u }{\nabla u}\,dx \\
				&& 
				- 2\int_{B_1} \iprod{\A(x)\nabla u}{\A(x)\nabla u}\,dx + \err,
			\end{array}
		\end{equation}
		where we have denoted 
		\[
		\err  := \sum_{i,j,k,l}  \int_{B_1} x_j (\pa_{x_i} a_{ij}) a_{kl}u_{x_k}u_{x_l}\,dx - 2\sum_{i,j,k,l}\int_{B_1}  u_{x_i} a_{ij} u_{x_k}  x_l (\pa_{x_j} a_{kl})\,dx+ \sum_{i,j,m,n}\int_{B_1} a_{ij}x_ju_{x_m}u_{x_n} (\pa_{x_i} a_{mn})\,dx.
		\]
	\end{lemma}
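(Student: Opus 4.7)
The natural multiplier for a Pohozaev identity of this shape is $M(x):=\iprod{\A(x)\na u}{x}=\sum_{m,n}a_{mn}(x)\,u_{x_m}\,x_n$, chosen so that the boundary term, after using $\vec n=x$ on $S_1$, already delivers a square term of the form $\iprod{\A\na u}{x}^{2}$. The plan is to start from $0=-\int_{B_1}\dv(\A\na u)\,M\,dx$, integrate by parts once to produce a boundary contribution on $S_1$ and an interior contribution involving $\iprod{\A\na u}{\na M}$, and then apply a second integration by parts to convert the Hessian-type term into the divergence that generates $\langle \A x,x\rangle$ on the sphere, with everything else collected into $\err$.

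First I would carry out the initial integration by parts. Using symmetry of $\A$, the boundary integrand $\iprod{\A\na u}{\vec n}\cdot M$ becomes $\iprod{\A\na u}{x}^{2}$, so this step yields
\begin{equation*}
\int_{S_1}\iprod{\A\na u}{x}^{2}\,d\sigma=\int_{B_1}\iprod{\A\na u}{\na M}\,dx.
\end{equation*}
Expanding $\partial_{x_l}M=\sum_{m,n}(\partial_{x_l}a_{mn})u_{x_m}x_n+\sum_{m,n}a_{mn}u_{x_m x_l}x_n+(\A\na u)_l$ and taking inner product with $\A\na u$ splits the right-hand side into three pieces: an error piece of the type $\sum a_{kl}u_{x_k}(\partial_{x_l}a_{mn})u_{x_m}x_n$, a Hessian piece, and the clean piece $\int_{B_1}\iprod{\A\na u}{\A\na u}\,dx$ produced by the $\delta_{nl}$ term.

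Next I would process the Hessian piece, which is the only genuinely nontrivial step. Using symmetry of $\A$ in the indices $k,l$ gives the identity $\sum_{k,l}a_{kl}u_{x_k}u_{x_m x_l}=\tfrac12\partial_{x_m}\iprod{\A\na u}{\na u}-\tfrac12\sum_{k,l}(\partial_{x_m}a_{kl})u_{x_k}u_{x_l}$, which is the crucial identity that creates the desired divergence structure. Multiplying by $a_{mn}x_n$, summing, and integrating by parts in $x_m$ against the vector field $\A x$ produces a boundary term $\tfrac12\int_{S_1}\iprod{\A\na u}{\na u}\iprod{\A x}{x}\,d\sigma$ (again using $\vec n=x$), an interior term $-\tfrac12\int_{B_1}\iprod{\A\na u}{\na u}\dv(\A x)\,dx$, and two further error pieces. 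Expanding $\dv(\A x)=\sum_i a_{ii}+\sum_{i,j}(\partial_{x_i}a_{ij})x_j$ isolates the clean bulk term $\tfrac12\int_{B_1}\left(\sum_i a_{ii}\right)\iprod{\A\na u}{\na u}\,dx$.

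Finally I would assemble everything, multiply through by $2$ so the $S_1$-weight $\iprod{\A x}{x}$ appears with coefficient $1$, and verify that the three residual bulk integrals match the definition of $\err$ in the statement (with the obvious index relabellings). The main obstacle is purely bookkeeping: keeping track of which symmetric pairs of indices belong to $\A$, which belong to the Hessian $D^{2}u$, and which arise from differentiating $a_{ij}$, so that the symmetry argument on $\sum a_{kl}u_{x_k}u_{x_m x_l}$ is applied to the correct index pair and no derivative of $\A$ is double-counted. Once $\cref{eq4.1}$ is established, the second equality in the statement is immediate by collecting the three derivative-of-$\A$ terms into $\err$.
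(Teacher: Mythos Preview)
Your proposal is correct and follows essentially the same route as the paper. The paper phrases the computation as two pointwise divergence identities, namely for $\dv\big(\A\nabla u\,\iprod{x}{\A\nabla u}\big)$ and $\dv\big(\A x\,\iprod{\A\nabla u}{\nabla u}\big)$, and then takes the linear combination that cancels the Hessian contributions before integrating; your two successive integrations by parts (first against the multiplier $M=\iprod{\A\nabla u}{x}$, then against the vector field $\A x$ after invoking the symmetry identity $\sum_{k,l}a_{kl}u_{x_k}u_{x_mx_l}=\tfrac12\partial_{x_m}\iprod{\A\nabla u}{\nabla u}-\tfrac12\sum_{k,l}(\partial_{x_m}a_{kl})u_{x_k}u_{x_l}$) reproduce exactly these two divergences, and the bookkeeping of the three derivative-of-$\A$ residuals into $\err$ matches.
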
 
	\begin{remark}
		Note that all the terms of $\err$ contain derivates of the entries of $\A$ and does not contain any terms from the Hessian matrix $[D^2u]$. In particular, \cref{eq4.1} already captures the cancellations of the form $\iprod{x}{[D^2u] \nabla u} = \iprod{\nabla u}{[D^2u] x}$ which is crucially used in the case of  harmonic functions to prove \cref{pohozaev}.
	\end{remark}
	\begin{proof}[Proof of \cref{poho_gen}]
		Simple calculations give
		\begin{equation}\label{a4.2}
			\begin{array}{l}
				\dv (\A\nabla u \iprod{x}{\A\nabla u}) = \dv (\A\nabla u) \iprod{x}{\A\nabla u} + \iprod{\A\nabla u}{\A \nabla u} + \iprod{\A\nabla u}{[D(\A\nabla u)] x},\\
				\dv(\A x \iprod{\A\nabla u}{\nabla u}) = \dv(\A x) \iprod{\A\nabla u}{\nabla u} + \iprod{\A x}{[D(\A\nabla u)]\nabla u} + \iprod{\A x}{[D^2u]\A\nabla u},
			\end{array}
		\end{equation}
		where $D(\A \nabla u)$ is a matrix of the form 
		\begin{equation}\label{a4.3}
			(D(\A\nabla u))_{ij} = \sum_{k=1}^n (\pa_{x_i} a_{jk}) u_{x_k} + (D^2u \A)_{ij} =: (D(A)(\nabla u))_{ij} + ([D^2u] \A)_{ij},
		\end{equation}
		and $([D^2 u] \A)$ is the matrix  product of  $[D^2u]\cdot \A$. 
		Using \cref{a4.3} in \cref{a4.2} and noting that $\dv(\A\nabla u)=0$, we get
		\begin{equation}\label{a4.4}
			\begin{array}{l}
				\dv (\A\nabla u \iprod{x}{\A\nabla u}) =  \iprod{\A\nabla u}{\A \nabla u} + \iprod{\A\nabla u}{[D(\A)(\nabla u)] x} + \iprod{\A\nabla u}{[D^2 u] \A x},\\
				\dv(\A x \iprod{\A\nabla u}{\nabla u}) = \dv(\A x) \iprod{\A\nabla u}{\nabla u} + \iprod{\A x}{[D(\A)(\nabla u)]\nabla u} + 2\iprod{\A x}{[D^2u]\A\nabla u},
			\end{array}
		\end{equation}
		Subtracting twice the first equation in  \cref{a4.4} from the second equation in \cref{a4.4} in order to cancel the terms containing $[D^2u]$ (since $\iprod{\A x}{[D^2u]\A\nabla u} = \iprod{\A \nabla u}{[D^2u]\A x}$), we get
		\begin{multline}\label{a4.5}
			2\dv (\A\nabla u \iprod{x}{\A\nabla u}) - \dv(\A x \iprod{\A\nabla u}{\nabla u})\\ = 2\iprod{\A\nabla u}{\A \nabla u} + 2\iprod{\A\nabla u}{[D(\A)(\nabla u)] x} - \dv(\A x) \iprod{\A\nabla u}{\nabla u} - \iprod{\A x}{[D(\A)(\nabla u)]\nabla u}.
		\end{multline}
		We see that $\dv (\A x) = \sum_{i=1}^n a_{ii} + \sum_{i,j} x_j \pa_{x_i} a_{ij}$
		which we substitute in \cref{a4.5} and integrate over $B_1$ to get
		\begin{equation*}
			\begin{array}{rcl}
				2 \int_{B_1}\dv (\A\nabla u \iprod{x}{\A\nabla u}) \,dx & = &  \int_{B_1} \dv(\A x \iprod{\A\nabla u}{\nabla u})\,dx + 2 \int_{B_1}\iprod{\A\nabla u}{\A \nabla u} \, dx \\
				&& + 2\int_{B_1}\iprod{\A\nabla u}{[D(\A)(\nabla u)] x}\,dx - \int_{B_1}\iprod{\A x }{[D(\A)(\nabla u)] \nabla u}\,dx\\
				&& - \int_{B_1} \lbr \sum_{i}^n a_{ii}\rbr \iprod{\A\nabla u }{\nabla u}\,dx - \int_{B_1} \lbr \sum_{i,j} x_j \pa_{x_i}a_{ij}\rbr\iprod{\A\nabla u }{\nabla u} \,dx.
			\end{array}
		\end{equation*}
		We can now apply the divergence theorem to complete the proof of the lemma. 
	\end{proof}
	\begin{corollary}
		\label{poho_const_corr}
		In the case $\A(x) \equiv \A$ is a constant matrix, then $\err = 0$ and \cref{eq4.1} becomes 
		\begin{equation}\label{eq4.1_corr}
			\begin{array}{rcl}
				\int_{S_1} \iprod{\A\nabla u}{\nabla u} \iprod{\A x}{x} \,d\sigma & = & 	2 \int_{S_1} \iprod{\A\nabla u}{x}^2\,d\sigma 
				+ \int_{B_1} \lbr \sum_{i}^n a_{ii}\rbr \iprod{\A\nabla u }{\nabla u}\,dx \\
				&& 
				- 2\int_{B_1} \iprod{\A\nabla u}{\A\nabla u}\,dx.
			\end{array}
		\end{equation}
	\end{corollary}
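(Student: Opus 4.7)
The plan is a direct specialization of \cref{poho_gen}, and essentially amounts to bookkeeping. Inspecting the definition of $\err$ carefully, I note that each of its three summands carries a factor of the form $\pa_{x_i} a_{ij}$, $\pa_{x_j} a_{kl}$, or $\pa_{x_i} a_{mn}$, i.e., a partial derivative of an entry of the coefficient matrix. When $\A(x)\equiv \A$ has constant entries, every such partial derivative vanishes identically on $B_1$, so each of the three integrands is zero pointwise. Hence $\err=0$ in the constant-coefficient regime; no integration by parts or regularity argument is required.

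Having established $\err=0$, I would substitute back into \cref{eq4.1}. On the left-hand side, $\A(x)\equiv \A$ removes the $x$-dependence from $\iprod{\A(x)x}{x}$, leaving $\iprod{\A x}{x}$ inside the boundary integrand. Similarly, on the right-hand side the boundary term $2\int_{S_1}\iprod{\A\nabla u}{x}^2\,d\sigma$, the trace term $\int_{B_1}\bigl(\sum_i a_{ii}\bigr)\iprod{\A\nabla u}{\nabla u}\,dx$, and the quadratic term $-2\int_{B_1}\iprod{\A\nabla u}{\A\nabla u}\,dx$ all have their matrix coefficients freed of $x$-dependence and appear exactly as in \cref{eq4.1_corr}.

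The key point — and the reason the argument is this short — is that \cref{poho_gen} was deliberately organized so that every term containing a derivative of $\A$ was collected into the single quantity $\err$. The only thing to check is that no stray factor of $D\A$ survives outside of $\err$ in \cref{eq4.1}, which is immediate from the display in that lemma. There is thus no genuine obstacle, and the corollary follows without further work.
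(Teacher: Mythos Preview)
Your proposal is correct and matches the paper's approach: the corollary is stated immediately after \cref{poho_gen} with no separate proof, precisely because it is the direct specialization you describe---every term in $\err$ carries a derivative of an entry of $\A$, which vanishes when $\A$ is constant. There is nothing to add.
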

	
	Let us now prove a matrix inequality using the Schur complement of a block matrix that will be the higher dimensional replacement of the cancellation used in  \redref{a2.3c}{c} of \cref{a2.3}.
	\begin{lemma}
		\label{schur_lemma}
		Let $\PP$ be the matrix similar to $\A$ in polar coordinates which we write as 
		\[\PP=\left( \begin{array}{c|c} p_{11} & P_{12}\\
			\hline
			P_{21} & \PP_{22} \end{array} \right),
		\]
		where $p_{11}$ is a scalar, $P_{12} = P_{21}^{\top}$ is $(n-1)$ vector and $\PP_{22}$ is $(n-1) \times (n-1)$ matrix. Then the following inequality holds:
		\begin{equation} \label{a4.8}   \la p_{11}|\xi|^2 \leq p_{11} \iprod{\PP_{22}\xi}{\xi} - \iprod{P_{12}}{\xi}^2 \qquad \text{for every} \,\, \xi \in \RR^{n-1}\setminus \{0\}.
		\end{equation}
	\end{lemma}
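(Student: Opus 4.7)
The plan is to recognize the right-hand side of \cref{a4.8} as $p_{11}$ times the quadratic form of the Schur complement $\PP/p_{11} := \PP_{22} - p_{11}^{-1} P_{21}P_{12}$, and then bound this Schur complement from below using ellipticity via a clever choice of test vector.

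First I would observe that since $\PP = \mathbb{J}^{\top}\A\mathbb{J}$ for an orthogonal matrix $\mathbb{J}$, the matrix $\PP$ inherits the same ellipticity bounds as $\A$: one has $\la|\eta|^2 \leq \iprod{\PP\eta}{\eta} \leq \La|\eta|^2$ for every $\eta \in \RR^n$. In particular, setting $\eta = \vec{e_1}$ gives $p_{11} \geq \la > 0$, so division by $p_{11}$ is legitimate.

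Next, for arbitrary $\xi \in \RR^{n-1}\setminus\{0\}$ and any scalar $s \in \RR$, I would test ellipticity on the vector $\eta = \begin{pmatrix} s \\ \xi \end{pmatrix} \in \RR^n$. Expanding the block form gives
\begin{equation*}
\iprod{\PP\eta}{\eta} \;=\; p_{11}\,s^2 \;+\; 2s\,\iprod{P_{12}}{\xi} \;+\; \iprod{\PP_{22}\xi}{\xi} \;\geq\; \la\,(s^2 + |\xi|^2).
\end{equation*}
The key idea is then to choose $s$ to minimize the left-hand side in $s$ while keeping $|\xi|^2$ fixed on the right; the minimizer is $s_{\ast} = -p_{11}^{-1}\iprod{P_{12}}{\xi}$, which by direct substitution yields
\begin{equation*}
\iprod{\PP_{22}\xi}{\xi} \;-\; \frac{\iprod{P_{12}}{\xi}^2}{p_{11}} \;\geq\; \la\,(s_{\ast}^2 + |\xi|^2) \;\geq\; \la\,|\xi|^2.
\end{equation*}
Multiplying through by $p_{11}>0$ gives exactly \cref{a4.8}.

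I do not expect any real obstacle here: the only subtlety is ensuring $p_{11}>0$ so the Schur complement is well-defined and the minimization in $s$ is valid, both of which follow immediately from the ellipticity of $\A$ (and hence of $\PP$). The argument is essentially the standard Schur complement characterization of positive definiteness, refined to track the quantitative lower bound $\la$ rather than mere positivity.
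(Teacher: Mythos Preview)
Your proof is correct and is essentially the same as the paper's: both apply the ellipticity of $\PP$ to the test vector $\eta=(-p_{11}^{-1}\iprod{P_{12}}{\xi},\,\xi)^{\top}$, recognize the resulting quadratic form as the Schur complement $\PP_{22}-p_{11}^{-1}P_{21}P_{12}$ acting on $\xi$, drop the nonnegative term $\la s_{\ast}^{2}$, and multiply through by $p_{11}$. The only cosmetic difference is that the paper arrives at this test vector via the block factorization $\mathbb{S}\PP\mathbb{S}^{\top}=\mathbb{M}$ (with $\mathbb{S}^{\top}(0,\xi)^{\top}$ producing exactly your $\eta$), whereas you obtain it by minimizing the quadratic in $s$; these are two standard and equivalent ways of exhibiting the Schur complement.
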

	\begin{proof}
		We have
		\[
		\underbrace{\begin{pmatrix}1 & 0\\-P_{21}p_{11}^{-1}&I\end{pmatrix}}_{:=\mathbb{S}}
		\underbrace{\begin{pmatrix}p_{11} & P_{12}\\P_{21} & P_{22}\end{pmatrix}}_{=\PP} \underbrace{\begin{pmatrix}1 & -p_{11}^{-1}P_{12}\\0 &I\end{pmatrix}}_{=\mathbb{S}^{\top}}
		= 
		\underbrace{\begin{pmatrix}p_{11} & 0\\0&P_{22} - P_{21}p_{11}^{-1}P_{12}\end{pmatrix}}_{:=\mathbb{M}}.
		\] 
		From this, we see that if we consider the $\RR^{n}$ vector $\begin{pmatrix}
			0\\ \xi 
		\end{pmatrix}$, then using \cref{maineq}, we have 
		\begin{equation*}
			\begin{array}{rcl}
				\la|\xi |^2\leq 	\la \frac{\iprod{P_{12}}{\xi }^2}{p_{11}^2}+\la|\xi |^2 &\leq& \iprod{\PP \begin{pmatrix}
						-p_{11}^{-1}\iprod{P_{12}}{\xi } \\\xi 
				\end{pmatrix}}{\begin{pmatrix}
						-p_{11}^{-1}\iprod{P_{12}}{\xi } \\\xi 
				\end{pmatrix}}	 \\
				&=&  \iprod{\mathbb{S} \mathbb{\PP} \mathbb{S}^{\top}\begin{pmatrix}
						0 \\\xi 
				\end{pmatrix}}{\begin{pmatrix}
						0 \\\xi 
				\end{pmatrix}}	 
				=  \iprod{ \mathbb{M} \begin{pmatrix}
						0 \\\xi 
				\end{pmatrix}}{\begin{pmatrix}
						0 \\\xi 
				\end{pmatrix}}	 \\
				& = & \iprod{\begin{pmatrix}p_{11} & 0\\0&P_{22} - P_{21}p_{11}^{-1}P_{12}\end{pmatrix}\begin{pmatrix}
						0 \\\xi 
				\end{pmatrix} }{\begin{pmatrix}
						0 \\\xi 
				\end{pmatrix}} \\
				& = & \iprod{(\PP_{22} - P_{21}p_{11}^{-1}P_{12})\xi }{\xi }
			\end{array}
		\end{equation*}
		In particular, we have 
		\begin{equation*}
			\la p_{11}|\xi|^2 \leq p_{11} \iprod{\PP_{22}\xi}{\xi} - \iprod{P_{12}}{\xi}^2 \qquad \text{for every} \,\, \xi \in \RR^{n-1},
		\end{equation*}
		which completes the proof of the lemma. 
	\end{proof}

	We are now ready to prove \cref{mainthm2} in the case $\A(x) \equiv \A$ is a constant matrix:
	
	\begin{proof}[Proof of \cref{mainthm2}]
		We make a note about the notation used, we will use $\A$ and $\nabla u$ to denote quantities in Cartesian coordinates and $\PP$ and $\nbar u$ to denote analogue quantities in Polar coordinates. It is easy to see that the matrices $\A$ and $\PP$ are similar to each other  via the spherical orthogonal change of variables, see the proof of \cref{thm2.1} for this in $\RR^2$.

		The proof follows in several steps:
		\begin{description}
			\item[Step 1:]  Using integration by parts and noting that $-\dv \A(x)\nabla u=0$, we have 
			\begin{equation*}
				\int_{B_1} \iprod{\A(x)\nabla u}{\nabla u}\, dx = \int_{S_1} (u-k) \iprod{\A(x)\nabla u}{\vec{n}} \,d\sigma,
			\end{equation*}
			where we take $k = \fint_{S_1} u \,d\sigma$. This can be further estimated after changing to polar coordinates to get
			\begin{equation}\label{eqa4.4}
				\begin{array}{rcl}
					\int_{S_1} (u-k) \iprod{\A(x)\nabla u}{\vec{n}} \,d\sigma &\leq &\frac{\ve}{2} \int_{S_1} (u-k)^2 \,d\sigma + \frac{1}{2\ve} \int_{S_1} \iprod{\PP(x) \nbar u}{\vec{e_1}}^2 \,d\sigma \\
					&\overset{\redlabel{a4.4a}{a}}{\leq} &\frac{\ve}{2(n-1)} \int_{S_1} |u_T|^2 \,d\sigma + \frac{1}{2\ve} \int_{S_1} \iprod{\PP(x) \nbar u}{\vec{e_1}}^2 \,d\sigma,
				\end{array}
			\end{equation}
			where to obtain \redref{a4.4a}{a}, we made use of \cref{poincare}. 
			\item[Step 2:] We will apply \cref{a4.8} with $\xi = u_T$ to get
			\begin{equation}\label{a4.13}
				\begin{array}{rcl}
					p_{11}\la |u_T|^2 & \leq &  p_{11}\iprod{\PP_{22}u_T}{u_T} - \iprod{P_{12}}{u_T}^2\\
					& = & p_{11}\iprod{\PP_{22}u_T}{u_T} - \iprod{P_{12}}{u_T}^2   +\iprod{\PP \nbar u}{\vec{e_1}}^2 - \iprod{\PP \nbar u}{\vec{e_1}}^2\\
					& = & p_{11} \iprod{\PP \nbar u}{\nbar u}- \iprod{\PP \nbar u}{\vec{e_1}}^2\\
				\end{array}
			\end{equation}

			Using \cref{a4.13} along with $\la \leq p_{11} \leq \La$, we estimate  $\int_{S_1} |u_T|^2 \,d\sigma$ in \cref{eqa4.4} to get
			\begin{equation}\label{a4.14}
				\int_{B_1} \iprod{\PP(x)\nbar u}{\nbar u}\, dx
				\leq \frac{\ve}{2(n-1)\la} \int_{S_1} \iprod{\PP \nbar u}{\nbar u} \,d\sigma + \lbr \frac{1}{2\ve}- \frac{\ve}{2(n-1)\la\La}\rbr \int_{S_1} \iprod{\PP(x) \nbar u}{\vec{e_1}}^2 \,d\sigma.
			\end{equation}
			\item[Step 3:] We recall Poho\v{z}aev identity from \cref{eq4.1_corr} noting that $\sum_{i}^n a_{ii}  = \sum_{i}^n p_{ii}$ to get
			\begin{equation}\label{eq4.15}
				\begin{array}{rcl}
					\int_{S_1} \iprod{\PP\nbar u}{\nbar u} \iprod{\PP \vec{e_1}}{\vec{e_1}} \,d\sigma & = & 	2 \int_{S_1} \iprod{\PP\nbar u}{\vec{e_1}}^2\,d\sigma 
					+ \int_{B_1} \lbr \sum_{i}^n p_{ii}\rbr \iprod{\PP \nbar u }{\nbar u}\,dx \\
					&& 
					- 2\int_{B_1} \iprod{\PP\nabla u}{\PP\nabla u}\,dx + \err.
				\end{array}
			\end{equation}
			\item[Step 4:] Let us restrict $\ve \leq \sqrt{(n-1)\la\La}$ which is equivalent to $\lbr \frac{1}{2\ve}- \frac{\ve}{2(n-1)\la\La}\rbr \geq 0$.  Combining \cref{eq4.15} and \cref{a4.14} gives \begin{multline*}
				\int_{B_1} \iprod{\PP(x)\nbar u}{\nbar u}\, dx
				\leq   \frac{\ve}{2(n-1)\la} \int_{S_1} \iprod{\PP \nbar u}{\nbar u} \,d\sigma
				+ \lbr \frac{1}{2\ve}- \frac{\ve}{2(n-1)\la\La}\rbr\times \\
				\times \frac12 \lbr \int_{S_1} \iprod{\PP\nbar u}{\nbar u} \iprod{\PP \vec{e_1}}{\vec{e_1}} \,d\sigma - \int_{B_1} \lbr \sum_{i}^n p_{ii}\rbr \iprod{\PP\nbar u }{\nbar u}\,dx 
				+ 2\int_{B_1} \iprod{\PP\nbar u}{\PP\nbar u} -\err \rbr.
			\end{multline*}
			In particular, after simplification, we get
			Simplifying, we get
			
			\begin{multline}\label{a4.17}
				\lbr \frac{1}{2\ve}- \frac{\ve}{2(n-1)\la\La}\rbr \frac12 \err + 	\int_{B_1}\lbr 1 - \lbr \frac{1}{2\ve}- \frac{\ve}{2(n-1)\la\La}\rbr \frac{(2\La - \lbr \sum_{i}^n p_{ii}\rbr )}{2}\rbr \iprod{\PP(x)\nbar u}{\nbar u}\, dx \leq \\
				\lbr \frac{\ve}{2(n-1)\la} + \lbr \frac{1}{2\ve}- \frac{\ve}{2(n-1)\la\La}\rbr \frac{\La}{2}\rbr \int_{S_1} \iprod{\PP \nbar u}{\nbar u} \,d\sigma.
			\end{multline}
			\item[Step 5:]
			Ignoring the contribution from $\err$ momentarily (see \cref{rmka4.9}), we proceed as follows:
			Denoting $T := \sum_i^n p_{ii}$, we see that $n\la \leq T \leq n \La$. So, let us maximize \cref{a4.17}  over $\ve \leq \sqrt{(n-1)\la\La}$ and $n\la \leq T \leq n\La$ to get
			\begin{equation*}
				\int_{B_1}\max_{T\in[n\la,n\La]}\max_{\ve\in[0, \sqrt{(n-1)\la\La}]}\frac{\lbr 1 - \lbr \frac{1}{2\ve}- \frac{\ve}{2(n-1)\la\La}\rbr \frac{(2\La - T )}{2}\rbr}{\lbr \frac{\ve}{2(n-1)\la} + \lbr \frac{1}{2\ve}- \frac{\ve}{2(n-1)\la\La}\rbr \frac{\La}{2}\rbr} \iprod{\PP\nbar u}{\nbar u}\, dx \leq 
				\int_{S_1} \iprod{\PP \nbar u}{\nbar u} \,d\sigma.
			\end{equation*}
			This maximum is achieved when $T=n\La$ and 
			\[
			\ve  = \frac{(2-n)\La + \sqrt{(n-2)^2\La^2 + 4(n-1)\la\La}}{2} = \frac{\La}{2} \lbr -(n-2)+ \sqrt{(n-2)^2+ \frac{4(n-1)\la}{\La}}\rbr.
			\]
			Furthermore, it is easy to see that $\ve\in[0, \sqrt{(n-1)\la\La}]$. 
		\end{description}
		Substituting the values of $\ve$ and $T$  into \cref{a4.17} gives
		\begin{equation*}
			\lbr \sqrt{(n-2)^2+ \tfrac{4(n-1)\la}{\La}}\rbr\int_{B_1} \iprod{\PP\nbar u}{\nbar u}\, dx \leq 
			\int_{S_1} \iprod{\PP \nbar u}{\nbar u} \,d\sigma,
		\end{equation*}
		which completes the proof of the theorem.
	\end{proof}
	
	\begin{remark}\label{rmka4.9}
		If we were to keep track of $\err$, then we would get
		\begin{multline*}
			\err \frac{\frac12\lbr \frac{1}{2\ve}- \frac{\ve}{2(n-1)\la\La}\rbr}{\lbr \frac{\ve}{2(n-1)\la} + \lbr \frac{1}{2\ve}- \frac{\ve}{2(n-1)\la\La}\rbr \frac{\La}{2}\rbr}
			+ \int_{B_1}\frac{\lbr 1 - \lbr \frac{1}{2\ve}- \frac{\ve}{2(n-1)\la\La}\rbr \frac{(2\La - T )}{2}\rbr}{\lbr \frac{\ve}{2(n-1)\la} + \lbr \frac{1}{2\ve}- \frac{\ve}{2(n-1)\la\La}\rbr \frac{\La}{2}\rbr} \iprod{\PP\nbar u}{\nbar u}\, dx 
			\leq 
			\int_{S_1} \iprod{\PP \nbar u}{\nbar u} \,d\sigma.
		\end{multline*}
		which after  substituting the values of $\ve$ and $T$, becomes
		\begin{equation*}
			\frac{(n-2)}{\La\lbr \sqrt{(n-2)^2+ \tfrac{4(n-1)\la}{\La}}\rbr} \err + 		\lbr \sqrt{(n-2)^2+ \tfrac{4(n-1)\la}{\La}}\rbr\int_{B_1} \iprod{\PP\nbar u}{\nbar u}\, dx \leq 
			\int_{S_1} \iprod{\PP \nbar u}{\nbar u} \,d\sigma.
		\end{equation*}
		In particular, for general $\A(x)$ matrices, \cref{mainthm2} holds provided $\err \geq 0$. 
	\end{remark}
	
	\begin{question}
		Does the following inequality hold for some $\mathbf{C}_o >0$?
		\[
		\mathbf{C}_o\err \geq 	\lbr \sqrt{(n-2)^2+ \tfrac{4(n-1)\la}{\La}}\rbr\int_{B_1} \iprod{\PP\nbar u}{\nbar u}\, dx - \int_{S_1} \iprod{\PP \nbar u}{\nbar u} \,d\sigma.
		\]
		
		If the above inequality holds, then in the case $\err \leq 0$, we automatically get the required estimate, thus completing the proof.
		
	\end{question}

	\section*{References}

\end{document}